\newtheorem{theorem}{Theorem}
\newtheorem{corollary}{Corollary}
\newtheorem{lemma}{Lemma}
\newtheorem{definition}{Definition}
\author{Ondrej Slu\v{c}iak}
\date{}
\title{On Inflection Points of the Lehmer Mean Function}
\newcommand{\ve}[1]{\MakeLowercase{\mathbf{#1}}}
\newcommand{\bfs}[1]{\boldsymbol{#1}}
\begin{document}
\maketitle

\begin{abstract}
We prove that the Lehmer mean function of two or three positive numbers has always one and only one inflection point. We further show that in case of two numbers, the inflection point is $p^\star=1$, and we discuss the location of the inflection point in case of three numbers. We furthermore provide an example of a Lehmer mean function with more than one inflection point and provide simple bounds on the number of inflection points for arbitrary many numbers.
\end{abstract}

\section{Introduction}
Over the years a big effort has been made in the analysis of the power mean function $M(p)$. Being a generalization of classical means\cite{Bullen03}, i.e.,
\[M(p) = \left(\sum_{i=1}^n\omega_ix_i^p\right)^{\frac{1}{p}},\]
it has attracted a lot of attention of mathematicians. The question of convexity of this function turned out to be very challenging\cite{Norris37,Beckenbach42,Shniad48}. It is now known that the power mean function, for $x_1\ne x_2$, has only one inflection point in case of $n=2$, which may be different from zero~\cite{DeLaGrandville06,Nam08}. It is also known that for $n>2$ the function may have more inflection points. However, very little is known about the location of the inflection points as well as the influence of $x_i$ on the behaviour of the function.

Even less is known about the so-called \emph{Lehmer mean} function\footnote{Note that in literature the term ``counter-harmonic mean''\cite{Bullen03}, or ``contraharmonic mean'' is often used. Although ``Lehmer mean'' is usually used for the case $n=2$~\cite{Lehmer71}, we will use this term for general case.} $L(p)$ and its convexity. 
To the best of our knowledge, the question of the number of inflection points has not been analyzed in literature either. In this paper we prove that there is always one and only one inflection point in case $n=2$ and $n=3$. We also provide a bound on the number of inflection points for an arbitrary $n$ and show an example of a Lehmer mean function with three inflection points.

\section{Lehmer mean}

First, let us review the definition of Lehmer mean.

\begin{definition}[Lehmer mean~\cite{Bullen03}]
Let $\ve{x}=\{x_1,x_2,\dots,x_n\}$ be $n$ real non-negative numbers, $\bfs{\omega}=\{\omega_1,\omega_2,\dots,\omega_3\}$ be~$n$ non-negative numbers (weights) and let for any $p\in\mathbb{R}$ hold that $\sum_{i=1}^n\omega_ix_i^{p-1}~\ne~0$. The Lehmer mean is then a function defined as
\begin{equation}\label{eq:genlehm}
L(p;\bfs{\omega},\ve{x}) \triangleq \frac{\sum_{i=1}^n \omega_ix_i^p}{\sum_{i=1}^n \omega_ix_i^{p-1}}.
\end{equation}
\end{definition}
Throughout this paper we are interested in the case when the weights $\omega_i$ are constant. By simplifying the notation, let us consider the Lehmer mean function to be defined as
\begin{equation}\label{eq:lehmf}
L(p) \triangleq \frac{\sum_{i=1}^n x_i^p}{\sum_{i=1}^n x_i^{p-1}}.
\end{equation}

Note that allowing parameters $x_i$ to be zero for some $i$, means that the number $n$ is reduced by the number of zero parameters. Without any inconsistency, we might also simply assume that $x_i$ are all positive.

Let us now recall some important properties.
\begin{lemma}\label{lem:monot}
$L(p)$ is a monotonously increasing function.
\end{lemma}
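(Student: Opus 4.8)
The plan is to show that $L'(p) > 0$ for all admissible $p$ by computing the derivative directly from the quotient definition in \eqref{eq:lehmf}. First I would write $L(p) = N(p)/D(p)$ with $N(p)=\sum_i x_i^p$ and $D(p)=\sum_i x_i^{p-1}$, so that $N'(p)=\sum_i x_i^p\ln x_i$ and $D'(p)=\sum_i x_i^{p-1}\ln x_i$. By the quotient rule the sign of $L'(p)$ is governed by the numerator $N'(p)D(p)-N(p)D'(p)$, since $D(p)^2>0$ always. The goal then reduces to proving that this numerator is nonnegative (and strictly positive unless all $x_i$ are equal).

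Expanding the product, I would write
\[
N'(p)D(p)-N(p)D'(p) = \sum_{i=1}^n\sum_{j=1}^n x_i^p x_j^{p-1}\ln x_i - \sum_{i=1}^n\sum_{j=1}^n x_i^p x_j^{p-1}\ln x_j.
\]
Swapping the dummy indices in the second double sum lets me combine everything into a single symmetric double sum of the form $\sum_{i,j} x_i^p x_j^{p-1}(\ln x_i - \ln x_j)$. The key algebraic step is then to symmetrize over the unordered pair $\{i,j\}$: grouping the $(i,j)$ and $(j,i)$ terms together produces factors of the shape $(x_i^p x_j^{p-1}-x_j^p x_i^{p-1})(\ln x_i-\ln x_j)$, which after factoring out $x_i^{p-1}x_j^{p-1}$ becomes $x_i^{p-1}x_j^{p-1}(x_i-x_j)(\ln x_i-\ln x_j)$.

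The crucial observation is that $(x_i-x_j)(\ln x_i-\ln x_j)\ge 0$ for all positive $x_i,x_j$, because $\ln$ is monotonically increasing, so the two differences always share the same sign; the product is zero precisely when $x_i=x_j$. Since $x_i^{p-1}x_j^{p-1}>0$ for positive $x_i,x_j$, every term in the symmetrized sum is nonnegative, whence $N'(p)D(p)-N(p)D'(p)\ge 0$ and therefore $L'(p)\ge 0$. Strict positivity holds unless all the $x_i$ coincide, in which case $L(p)$ is the constant common value; this degenerate case can be dismissed in one line, consistent with the paper's standing assumption that the $x_i$ are positive and (for a nontrivial mean) not all equal.

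I expect the main obstacle to be purely bookkeeping: carrying out the index-swap and pair-symmetrization cleanly so that the sum collapses into the manifestly nonnegative form above without sign errors. No deep inequality is needed beyond the monotonicity of the logarithm, so the difficulty is organizational rather than conceptual, and the argument is essentially the same Chebyshev-type rearrangement that underlies the monotonicity of the power mean.
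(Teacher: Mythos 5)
Your proof is correct, but it takes a genuinely different route from the paper. The paper does not differentiate at all: it cites the convexity of $f(t)=\log\left(\sum_{i=1}^n x_i^t\right)$ and the increasing-slopes property of convex functions, $\frac{f(x_1)-f(x_2)}{x_1-x_2}\leq\frac{f(y_1)-f(y_2)}{y_1-y_2}$ for $x_1\le y_1$, $x_2\le y_2$; choosing $x_1=p$, $x_2=p-1$, $y_1=s$, $y_2=s-1$ with $p\le s$ immediately gives $\log L(p)\le\log L(s)$, hence $L(p)\le L(s)$. That argument is shorter and leans on a known result from Bullen, but it delivers only non-strict monotonicity as stated. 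Your derivative computation is self-contained and elementary, and the pairwise symmetrization collapsing $N'(p)D(p)-N(p)D'(p)$ into $\sum_{i<j} x_i^{p-1}x_j^{p-1}(x_i-x_j)(\ln x_i-\ln x_j)\ge 0$ is exactly right, with equality in each term precisely when $x_i=x_j$. A notable bonus of your route: the quantity you prove nonnegative is, up to the positive factor $N(p)D(p)$, precisely the difference in inequality~\eqref{eq:ineq}, so your single computation establishes both Lemma~\ref{lem:monot} \emph{and} Corollary~\ref{cor:ineq} (including its equality case) in one stroke, whereas the paper first proves the lemma via convexity and then extracts the corollary from $L'(p)\ge 0$ separately. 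In the end the two arguments rest on the same underlying fact --- the convexity of $\log\sum_i x_i^t$ is itself equivalent, via its second derivative being a variance, to a Chebyshev-type pairwise inequality --- so yours can be viewed as an unpacked, quantitative version of the paper's citation.
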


\begin{proof}
The proof follows from~\cite[p.~246, Theorem~3]{Bullen03}. In general, if a function $f(\cdot)$ is convex then for $x_1\leq y_1$ and $x_2\leq y_2$, it holds that $\frac{f(x_1)-f(x_2)}{x_1-x_2}\leq \frac{f(y_1)-f(y_2)}{y_1-y_2}$. Directly setting $f(t)=\log(\sum_{i=1}^nx_i^t)$, which is a convex function for $x_i> 0$, and if $x_1=p$, $x_2=p-1$, $y_1=s$, and $y_2=s-1$, then for any $p\leq s$,
$\frac{\sum_{i=1}^nx_i^p}{\sum_{i=1}^nx_i^{p-1}}\leq  \frac{\sum_{i=1}^nx_i^s}{\sum_{i=1}^nx_i^{s-1}}$.
\end{proof}
\vspace{0.3cm}

\noindent As a consequence of Lemma~\ref{lem:monot}, we obtain the following inequality.
\begin{corollary}\label{cor:ineq}
For any $x_i> 0$ and $p\in\mathbb{R}$,
\begin{equation}\label{eq:ineq}
\frac{\sum_{i=1}^n x_i^p\log x_i}{\sum_{i=1}^n x_i^p}\geq\frac{\sum_{i=1}^n x_i^{p-1}\log x_i}{\sum_{i=1}^n x_i^{p-1}},
\end{equation}
with equality if and only if all $x_i=x_j$.
\end{corollary}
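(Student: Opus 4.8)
The plan is to derive the inequality directly from Lemma~\ref{lem:monot} by expressing the monotonicity of $L(p)$ in analytic form. First I would write
\[
L(p) = \frac{\sum_{i=1}^n x_i^p}{\sum_{i=1}^n x_i^{p-1}},
\]
and observe that $L$ is differentiable in $p$ since each $x_i>0$ makes $x_i^p$ a smooth function of $p$. Lemma~\ref{lem:monot} asserts that $L$ is monotonously increasing, so the key idea is simply that $L'(p)\geq 0$ for all $p\in\mathbb{R}$. The bulk of the work is then the routine computation of $L'(p)$ via the quotient rule, using that $\frac{d}{dp}x_i^p = x_i^p\log x_i$.

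Carrying out the differentiation, the numerator of $L'(p)$ is
\[
\left(\sum_i x_i^p\log x_i\right)\left(\sum_i x_i^{p-1}\right) - \left(\sum_i x_i^p\right)\left(\sum_i x_i^{p-1}\log x_i\right),
\]
while the denominator is $\bigl(\sum_i x_i^{p-1}\bigr)^2 > 0$. Since $L'(p)\geq 0$ and the denominator is strictly positive, the numerator must be non-negative. Dividing the non-negativity of the numerator by the positive quantity $\bigl(\sum_i x_i^p\bigr)\bigl(\sum_i x_i^{p-1}\bigr)$ rearranges precisely into the claimed inequality~\eqref{eq:ineq}. This is the entire argument; there is no genuine obstacle, as the corollary is essentially a restatement of $L'(p)\geq 0$.

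The one point requiring a little care is the equality condition. I expect the cleanest route is to note that the numerator of $L'(p)$ has the structure of a covariance-type expression: it can be written as a sum over pairs $(i,j)$ of terms proportional to $x_i^{p-1}x_j^{p-1}(x_i-x_j)(\log x_i-\log x_j)$, which is the form one gets from a Chebyshev-type sum inequality. Because $t\mapsto t$ and $t\mapsto\log t$ are both increasing, each such pairwise term is non-negative, and it vanishes exactly when $x_i=x_j$. Hence the total vanishes if and only if all the $x_i$ coincide, which furnishes the equality condition; alternatively one may appeal to the strict convexity of $f(t)=\log\sum_i x_i^t$ (used in Lemma~\ref{lem:monot}) to recover strictness when the $x_i$ are not all equal. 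I would present the derivation of $L'(p)$ as the main display and then remark that the equality case follows from this pairwise rewriting.
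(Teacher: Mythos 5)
Your proposal is correct, and its main line coincides with the paper's: the paper likewise treats \eqref{eq:ineq} as a restatement of $L^\prime(p)\geq 0$, writing $L^\prime(p)=L(p)\bigl[\cdot\bigr]$ with the bracket equal to the difference of the two sides of \eqref{eq:ineq}, and then invoking Lemma~\ref{lem:monot} together with $L(p)>0$; your quotient-rule computation is the same calculation with $L(p)$ not factored out, and dividing the numerator by $\bigl(\sum_i x_i^p\bigr)\bigl(\sum_i x_i^{p-1}\bigr)$ is the same rearrangement. Where you genuinely diverge is the equality case. The paper simply asserts that equality holds if and only if all $x_i$ coincide, citing Lemma~\ref{lem:monot}---which, as stated, guarantees only (non-strict) monotonicity, so the strictness claim is not actually justified there. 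Your Chebyshev-type rewriting of the numerator as $\sum_{i<j}x_i^{p-1}x_j^{p-1}(x_i-x_j)(\log x_i-\log x_j)$, each term non-negative (since $t\mapsto t$ and $t\mapsto\log t$ are increasing) and vanishing precisely when $x_i=x_j$, supplies exactly the missing strictness argument. In fact this pairwise decomposition proves the whole inequality from scratch, so your appeal to Lemma~\ref{lem:monot} becomes redundant: your argument is self-contained and could even be run in reverse, yielding an independent (and in this respect tighter) proof of Lemma~\ref{lem:monot} itself, including its strict version for non-identical $x_i$.
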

\vspace{0.1cm}

\begin{proof}
By taking the first derivative, we find
\begin{equation}
L^{\prime}(p) = L(p)\left[\frac{\sum_{i=1}^n x_i^p\log x_i}{\sum_{i=1}^n x_i^p}-\frac{\sum_{i=1}^n x_i^{p-1}\log x_i}{\sum_{i=1}^n x_i^{p-1}}\right]. 
\end{equation}
Since $L(p)> 0$ and from Lemma~\ref{lem:monot} follows that also for the second part it must hold that $\frac{\sum_{i=1}^n x_i^p\log x_i}{\sum_{i=1}^n x_i^p}\geq\frac{\sum_{i=1}^n x_i^{p-1}\log x_i}{\sum_{i=1}^n x_i^{p-1}}$, with equality if and only if $x_i=x_j$, for all $i,j$.
\end{proof}
\vspace{0.3cm}

Besides the monotonicity, as in case of other mean functions, it is also known that the Lehmer mean is a continuous function bounded by two horizontal asymptotes, i.e.,
\[\lim_{p\to-\infty}L(p) = \min_{i=1,2,\dots,n} x_i,\]
and
\[\lim_{p\to\infty}L(p) = \max_{i=1,2,\dots,n} x_i.\]
This means that function $L(p)$ is convex near $p=-\infty$ and concave near $p=\infty$ and thus the Lehmer mean function must have at least one inflection point. However, to the best of our knowledge, the true number of inflection points, or bounds on the number of inflection points have yet not been analyzed in literature.

\section{Inflection points of the Lehmer mean function}

Analogously to power mean functions~\cite{Shniad48}, the second derivative of the Lehmer mean takes the form
\[
L^{\prime\prime}(p)=L(p)\left(\Lambda^{\prime\prime}(p)+(\Lambda^\prime(p))^2\right),
\]
where $\Lambda(p)=\log L(p)$.
After expanding the expression we find that
\begin{multline}\label{eq:secder}
L^{\prime\prime}(p) = L(p)\!\left[\frac{\sum_{i=1}^n x_i^p(\log x_i)^2}{\sum_{i=1}^n x_i^p}-\frac{\sum_{i=1}^n x_i^{p-1}(\log x_i)^2}{\sum_{i=1}^n x_i^{p-1}}\right.\\\left.-2\frac{\sum_{i=1}^n x_i^{p-1}\log x_i}{\sum_{i=1}^n x_i^{p-1}}\left(\frac{\sum_{i=1}^n x_i^p\log x_i}{\sum_{i=1}^n x_i^p}-\frac{\sum_{i=1}^n x_i^{p-1}\log x_i}{\sum_{i=1}^n x_i^{p-1}}\right)\right]\!,
\end{multline}
which is, in general, easier to analyze than the second derivative of the power mean function~\cite{Nam08}. 

\begin{lemma}\label{lem:odd}
The number of inflection points must be an odd number.
\end{lemma}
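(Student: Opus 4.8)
The plan is to exploit the two facts already established about the asymptotic behaviour of $L(p)$. We know that $L(p)$ is continuous, strictly increasing, and bounded, with $L(p)\to\min_i x_i$ as $p\to-\infty$ and $L(p)\to\max_i x_i$ as $p\to+\infty$. In particular the excerpt has already noted that $L$ is convex near $-\infty$ and concave near $+\infty$, so the second derivative $L''(p)$ is positive for all sufficiently negative $p$ and negative for all sufficiently large $p$.

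First I would record that an inflection point of $L$ is precisely a point where $L''$ changes sign, i.e.\ a sign change of the bracketed factor in~\eqref{eq:secder} (since $L(p)>0$ throughout, the sign of $L''$ is governed entirely by the bracket). Thus inflection points correspond to sign changes of a single continuous function; call it $\Phi(p)$, the bracketed expression, so that $L''(p)=L(p)\,\Phi(p)$. The inflection points are the points where $\Phi$ changes sign. The key structural input is that $\Phi(p)>0$ for $p$ near $-\infty$ and $\Phi(p)<0$ for $p$ near $+\infty$.

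Next I would count sign changes of $\Phi$ along the whole real line. Starting from a positive value at $-\infty$ and ending at a negative value at $+\infty$, any continuous function must cross zero an \emph{odd} number of times if we count genuine sign changes (each inflection point is such a crossing). More precisely, reading the signs of $\Phi$ on the successive intervals between consecutive inflection points gives a string of $+$'s and $-$'s that begins with $+$ and ends with $-$; the number of transitions between adjacent symbols is exactly the number of inflection points, and a string that starts with one symbol and ends with the opposite symbol has an odd number of transitions. This is the whole argument, and it yields that the total number of inflection points is odd.

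The only subtlety, and the step I expect to need the most care, is the distinction between a point where $\Phi$ merely touches zero without changing sign (not an inflection point, an even-order contact) and a genuine sign-changing zero (a true inflection point). The parity argument counts only sign changes, so I should make explicit that ``inflection point'' here means a sign change of $L''$, and that tangential zeros of $\Phi$ do not count as inflection points; with that convention the odd-count conclusion is forced purely by the mismatched boundary signs at $\pm\infty$, independent of any finer analysis of $\Phi$. No detailed computation with~\eqref{eq:secder} is required for this lemma — it is entirely a consequence of the sign behaviour of $L''$ at the two ends of the real line.
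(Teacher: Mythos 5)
Your proposal is correct and follows essentially the same route as the paper: both arguments derive the odd parity purely from the boundary sign behaviour, $L^{\prime\prime}(p)>0$ near $p=-\infty$ and $L^{\prime\prime}(p)<0$ near $p=+\infty$, by counting sign changes of the continuous factor governing the sign of $L^{\prime\prime}$. The only difference is in handling degenerate zeros: the paper asserts that strict monotonicity rules out ``saddle-type'' (tangential) zeros of $L^{\prime\prime}$, whereas you exclude them by taking the standard convention that an inflection point is a sign change of $L^{\prime\prime}$ — if anything the more careful treatment, since strict monotonicity of $L$ alone does not obviously preclude an even-order zero of $L^{\prime\prime}$, and like the paper you implicitly (and harmlessly, given the exponential-polynomial structure established later) assume the number of such sign changes is finite.
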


\begin{proof}Note that since the monotonicity is strict, unless $x_i=x_j$ for all $i,j$, the zeros of the second derivative are always inflection points and never saddle points. This, together with the fact that there are two asymptotes for $p=-\infty$ and $p=\infty$, means that the number of zeros of the second derivative must be always odd. Thus, there can be only one, three, five, etc., inflection points.
\end{proof}

\vspace{0.3cm}
Before stating some general bounds on the number of inflection points based on~\eqref{eq:secder}, let us consider simpler cases.

\subsection[Special case n=2]{Special case $n=2$}\label{subsec:n2}

Let us start with a simple case $n=2$. Unlike the power mean function~\cite{Nam08}, in case of Lehmer mean it is easy to find that Eq.~\eqref{eq:secder} in case $n=2$ takes the form
\begin{equation}
L^{\prime\prime}(p) = x_1(a-1)(\log a)^2a^p\frac{a-a^p}{(a^p+a)^3},
\end{equation}
with $a=\frac{x_1}{x_2}$.

We can thus directly see that the function has only one inflection point in $p=1$ if $x_1\ne x_2$. Interestingly, $L(1)=\frac{x_1+x_2}{2}$, i.e., the Lehmer mean function $L(p)$ changes from convex to concave at the arithmetic mean\footnote{Note that the inflection point of the general Lehmer mean function~\eqref{eq:genlehm} with weights $\omega_1$ and $\omega_2$ is $p^\star=1-\frac{\log\frac{\omega_1}{\omega_2}}{\log\frac{x_1}{x_2}}$. Furthermore, by plugging $p^\star$ into \eqref{eq:genlehm} we observe that the general Lehmer mean changes from convex to concave again at the simple arithmetic mean, i.e., $L(p^\star;\bfs{\omega},\ve{x})=\frac{x_1+x_2}{2}$, and not at the weighted arithmetic mean.} (see~the example on Fig.~\ref{fig:fig1} with $x_1=0.5$ and $x_2=2.5$.). 

\begin{figure}[t!]\centering
\subfloat[][Lehmer mean function $L(p)$.]{\centering
\psfrag{p}[Bc][Bc][0.8][0]{$p$}
\psfrag{L}[Bc][Bc][0.8][0]{\raisebox{0.4cm}{$L(p)$}}
\psfrag{a}[Bc][Bc][0.7][0]{$-4$}
\psfrag{b}[Bc][Bc][0.7][0]{$-3$}
\psfrag{c}[Bc][Bc][0.7][0]{$-2$}
\psfrag{d}[Bc][Bc][0.7][0]{$-1$}
\psfrag{e}[Bc][Bc][0.7][0]{$0$}
\psfrag{f}[Bc][Bc][0.7][0]{$1$}
\psfrag{g}[Bc][Bc][0.7][0]{$2$}
\psfrag{h}[Bc][Bc][0.7][0]{$3$}
\psfrag{i}[Bc][Bc][0.7][0]{$4$}
\psfrag{j}[Bc][Bc][0.7][0]{$5$}
\psfrag{A}[Br][Br][0.7][0]{$0$}
\psfrag{B}[Br][Br][0.7][0]{$0.5$}
\psfrag{C}[Br][Br][0.7][0]{$1$}
\psfrag{D}[Br][Br][0.7][0]{$1.5$}
\psfrag{E}[Br][Br][0.7][0]{$2$}
\psfrag{F}[Br][Br][0.7][0]{$2.5$}
\psfrag{G}[Br][Br][0.7][0]{$3$}
\psfrag{X}[Bc][Bc][0.8][0]{$p^\star=1$}
\includegraphics[width=5.4cm]{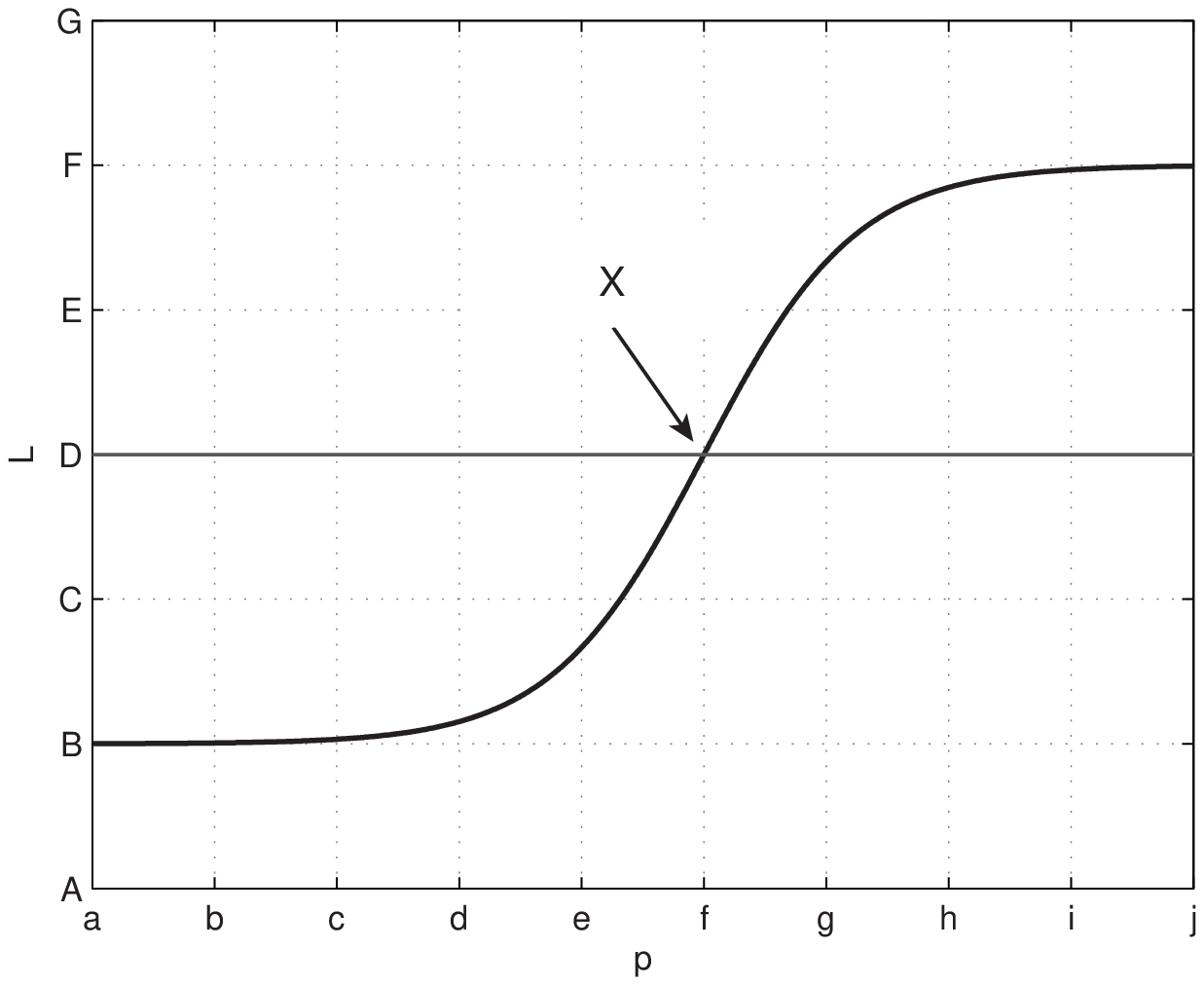}
}\hspace{0.6cm}
\subfloat[][Second derivative of function $L(p)$.]{\centering
\psfrag{p}[Bc][Bc][0.8][0]{$p$}
\psfrag{M}[Bc][Bc][0.8][0]{\raisebox{0.4cm}{$L^{\prime\prime}(p)$}}
\psfrag{a}[Bc][Bc][0.7][0]{$-4$}
\psfrag{b}[Bc][Bc][0.7][0]{$-3$}
\psfrag{c}[Bc][Bc][0.7][0]{$-2$}
\psfrag{d}[Bc][Bc][0.7][0]{$-1$}
\psfrag{e}[Bc][Bc][0.7][0]{$0$}
\psfrag{f}[Bc][Bc][0.7][0]{$1$}
\psfrag{g}[Bc][Bc][0.7][0]{$2$}
\psfrag{h}[Bc][Bc][0.7][0]{$3$}
\psfrag{i}[Bc][Bc][0.7][0]{$4$}
\psfrag{j}[Bc][Bc][0.7][0]{$5$}
\psfrag{A}[Br][Br][0.7][0]{$-0.5$}
\psfrag{B}[Br][Br][0.7][0]{$-0.4$}
\psfrag{C}[Br][Br][0.7][0]{$-0.3$}
\psfrag{D}[Br][Br][0.7][0]{$-0.2$}
\psfrag{E}[Br][Br][0.7][0]{$-0.1$}
\psfrag{F}[Br][Br][0.7][0]{$0$}
\psfrag{G}[Br][Br][0.7][0]{$0.1$}
\psfrag{H}[Br][Br][0.7][0]{$0.2$}
\psfrag{I}[Br][Br][0.7][0]{$0.3$}
\psfrag{J}[Br][Br][0.7][0]{$0.4$}
\psfrag{K}[Br][Br][0.7][0]{$0.5$}
\psfrag{X}[Bl][Bl][0.8][0]{$p^\star=1$}
\includegraphics[width=5.4cm]{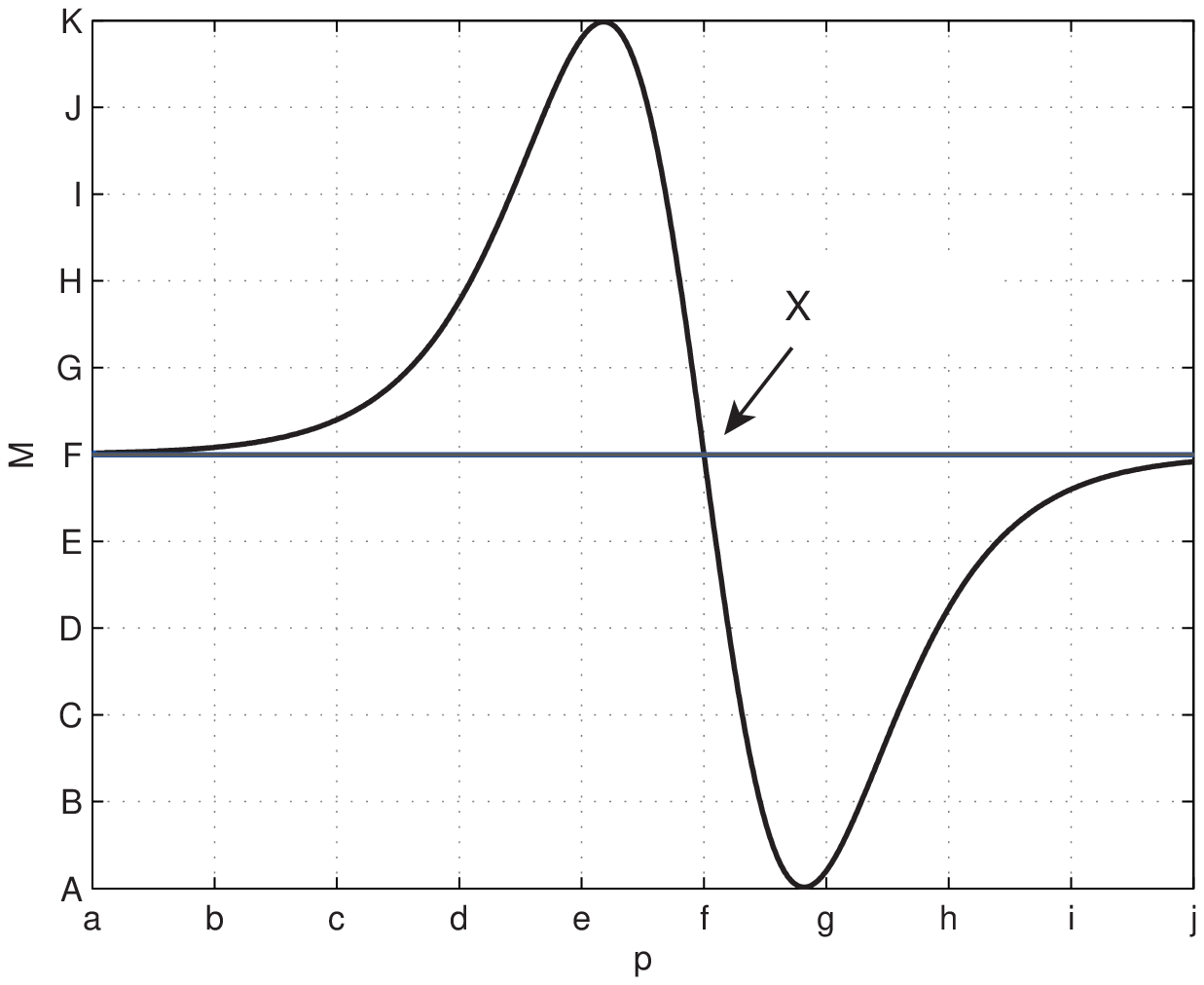}
}
\caption{Example for $n=2$. $L(p) = \frac{0.5^p+2.5^p}{0.5^{p-1}+2.5^{p-1}}$. Inflection point at the arithmetic mean. $p^\star=1$, $L(1)=\frac{0.5+2.5}{2}=1.5$.}\label{fig:fig1}
\end{figure}

\pagebreak
\subsection[Special case n=3]{Special case $n=3$}\label{subsec:n3}

With increasing number $n$, the analysis becomes more complicated. Still feasible is the case of $n=3$. By taking the second derivative and after rearranging the terms, we find that
\begin{align}\label{eq:n3}\textstyle
L^{\prime\prime}(p)=\frac{(x_1x_2x_3)^{p-1}}{\left(x_1^{p-1}+x_2^{p-1}+x_3^{p-1}\right)^3}\Biggl[
&\textstyle\left.\left(\left(\frac{x_2}{x_3}\right)^{p-1}-\left(\frac{x_1}{x_3}\right)^{p-1}\right)(x_1-x_2)\left(\log\frac{x_1}{x_2}\right)^2+\right.\nonumber\\
&\textstyle\left.\left(\left(\frac{x_3}{x_2}\right)^{p-1}-\left(\frac{x_1}{x_2}\right)^{p-1}\right)(x_1-x_3)\left(\log\frac{x_1}{x_3}\right)^2+\right.\nonumber\\
&\textstyle\left(\left(\frac{x_3}{x_1}\right)^{p-1}-\left(\frac{x_2}{x_1}\right)^{p-1}\right)(x_2-x_3)\left(\log\frac{x_2}{x_3}\right)^2+K\ \Biggr],
\end{align}\vspace{-0.2cm}
with 
\begin{equation}\label{eq:K}
K = (x_1-x_2)\log \frac{x_1}{x_2}\log\frac{x_1x_2}{x_3x_3}+(x_1-x_3)\log \frac{x_1}{x_3}\log\frac{x_1x_3}{x_2x_2}+(x_2-x_3)\log \frac{x_2}{x_3}\log\frac{x_2x_3}{x_1x_1}.
\end{equation}

We now show that there is one and only one inflection point in this case, thus, $L(p)$ is concave for $p>p^\star$ and convex for $p<p^\star$. Note that such result is not known in case of power mean functions.

\begin{theorem}
Assuming $n=3$ and $x_1\ne x_2\ne x_3$, the Lehmer mean function $L(p)$ has one and only one inflection point and this inflection point is different from $1$. 
\end{theorem}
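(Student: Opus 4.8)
The plan is to read the inflection points directly off the explicit second derivative \eqref{eq:n3}. The prefactor $\frac{(x_1x_2x_3)^{p-1}}{(x_1^{p-1}+x_2^{p-1}+x_3^{p-1})^3}$ is strictly positive, so $L''$ vanishes and changes sign exactly where the bracketed factor does; call this factor $g(p)$. By Lemma~\ref{lem:odd} the number of sign changes is already known to be odd and at least one, so it suffices to prove two things: that $g$ changes sign exactly once, and that this sign change does not occur at $p=1$.

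First I would expose the structure of $g$. After relabelling we may assume $x_1>x_2>x_3>0$; set $t=p-1$, $u=\log(x_1/x_2)>0$ and $v=\log(x_2/x_3)>0$. Each of the first three summands of \eqref{eq:n3} then has the form $\kappa\,(e^{\alpha t}-e^{\beta t})$ with $\kappa=(x_i-x_j)\,(\log(x_i/x_j))^2>0$ and $\alpha<\beta$, hence carries the sign $-\mathrm{sign}(t)$ and vanishes at $t=0$. Denoting their sum by $S(t)$ and using $e^{\alpha t}-e^{\beta t}=-t\int_\alpha^\beta e^{st}\,ds$, I would record the representation
\[
S(t)=-t\,\Phi(t),\qquad \Phi(t)=\int \mu(s)\,e^{st}\,ds>0,
\]
where $\mu\ge0$ is an explicit compactly supported step function on $[-(u+v),\,u+v]$. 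With $K$ as in \eqref{eq:K} this gives $g(t)=K-t\,\Phi(t)$.

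A Descartes-type sign count on the seven exponentials in $g$ (exponents $0,\pm u,\pm v,\pm(u+v)$) only caps the number of zeros at three or five, so to obtain the sharp count one genuinely needs a finer monotonicity. The heart of the argument is therefore to show that $Q(t):=t\,\Phi(t)$ is strictly increasing on $\mathbb{R}$, i.e. that
\[
g'(t)=-\int (1+ts)\,\mu(s)\,e^{st}\,ds<0 .
\]
Since $\Phi>0$ forces $Q(-\infty)=-\infty$ and $Q(+\infty)=+\infty$, strict monotonicity makes $Q(t)=K$ have exactly one root, which is then the unique inflection point; together with the convex/concave asymptotics already noted in the text this also fixes the sign pattern. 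The inequality $g'<0$ is the obstacle I expect to be hardest. For $|t|\le 1/(u+v)$ one has $1+ts\ge0$ on $\mathrm{supp}\,\mu$, so $g'<0$ is immediate; the real work is the regime $|t|>1/(u+v)$, where $1+ts$ turns negative on part of the support and one must show that the exponential weight $e^{st}$ lets the portion of $\mu$ on which $ts>0$ outweigh the sign-reversed tail. Equivalently, one proves $S'(t)<0$ by bounding the single summand of $S'$ that can be positive by the other two, and carrying this estimate through for all real $t$ and all admissible $u,v,\kappa$ is the crux.

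Finally, for the assertion $p^\star\ne1$, I would evaluate $g$ at $t=0$: all three difference terms vanish and $g(0)=K$. Hence the unique inflection point equals $1$ precisely when $K=0$, so it remains to show $K\ne0$ for distinct $x_i$. Writing \eqref{eq:K} in the variables $a=\log x_1,\,b=\log x_2,\,c=\log x_3$ yields $K=(e^a-e^b)(a-b)(a+b-2c)$ plus its two cyclic images, a sum of three products whose leading factors $(e^a-e^b)(a-b)$ are positive but whose second factors $a+b-2c$ change sign. Because $K$ is not manifestly one-signed, I expect this to be the delicate secondary point: one must either uncover a definite-sign (Schur-type) rearrangement of this expression, or else identify precisely which triples could make $K$ vanish.
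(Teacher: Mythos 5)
Your reduction coincides with the paper's: drop the positive prefactor in \eqref{eq:n3}, study the bracket (your $g$, the paper's $\widetilde L$), observe $g$ at $p=1$ equals $K$, and try to show $g$ is strictly monotone so that the asymptotics force exactly one sign change. Your integral representation $g(t)=K-t\,\Phi(t)$ with $\Phi(t)=\int\mu(s)e^{st}\,ds$, $\mu=\kappa_A\mathbf{1}_{[v,u+v]}+\kappa_B\mathbf{1}_{[-v,u]}+\kappa_C\mathbf{1}_{[-(u+v),-u]}\ge 0$, is correct and is a genuinely nice reformulation. But the proposal stops exactly at the load-bearing step: you never prove $g'(t)<0$, and you say yourself that the regime $|t|>1/(u+v)$ is ``the crux.'' This is a real gap, not a formality: $\int(1+ts)\mu(s)e^{st}\,ds>0$ is \emph{false} for a general nonnegative $\mu$ (a single interval far from the origin violates it), so the specific coupling between the weights $\kappa_A,\kappa_B,\kappa_C$ and the three intervals must enter, and your sketch gives no mechanism for it. The paper closes this gap with a short algebraic device you could import: the numerator of $L'(p)$ for $n=3$, namely $\sum_{i<j}(x_ix_j)^{p-1}(x_i-x_j)\log\frac{x_i}{x_j}$, is termwise nonnegative since $(x_i-x_j)\log\frac{x_i}{x_j}\ge 0$; dividing this inequality in turn by $(x_1x_3)^{p-1}$, $(x_2x_3)^{p-1}$, $(x_1x_2)^{p-1}$ yields the three rearranged bounds \eqref{subeq:1}--\eqref{subeq:3}, and substituting these into the explicit expression \eqref{eq:tildLder} for ${\widetilde L}'(p)$ makes everything collapse to $\log\frac{x_1}{x_3}\log\frac{x_2}{x_1}\log\frac{x_2}{x_3}\bigl((x_1-x_3)-(x_2-x_3)-(x_1-x_2)\bigr)=0$. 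In your language, that is precisely the estimate showing the exponentially weighted mass with $ts>0$ dominates the sign-reversed tail of $(1+ts)$; without it (or a substitute) the uniqueness claim is unproven. Your Descartes-type remark is consistent with this: the sign count on the seven exponentials alone cannot give one zero, so some such finer argument is unavoidable.

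On the clause $p^\star\ne 1$: you correctly reduce it to $K\ne 0$ and rightly refuse to claim $K$ has a fixed sign. Be aware that the paper does not prove this step either --- it simply asserts that $K\ne 0$ unless $x_1=x_2=x_3$ --- and your suspicion that one must ``identify precisely which triples could make $K$ vanish'' is well founded. Indeed $K(1,2,3)\approx -0.95<0$ (the paper's own Fig.~\ref{fig:fig2}) while $K(\tfrac13,\tfrac12,1)\approx +0.61>0$, and along the straight segment joining $(1,2,3)$ to $(\tfrac13,\tfrac12,1)$ the coordinates remain distinct and ordered, so by continuity $K$ vanishes at some triple of pairwise distinct numbers; there the (unique) inflection point sits exactly at $p^\star=1$ since $L''(1)=K/27$. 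So this part of the statement cannot be proved as it stands, and your hedge points at an actual flaw in the theorem rather than merely at a hard step. In summary: same strategy as the paper, a cleaner integral bookkeeping, but the central monotonicity inequality is left unproven (the paper supplies it via Corollary~\ref{cor:ineq}), and the $K\ne 0$ claim is unproven in both your proposal and the paper --- and is in fact false in general.
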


\begin{proof}
Since $x_1,x_2,$ and $x_3$ are all positive real numbers, the first part of the right hand side of Eq.~\eqref{eq:n3} is always positive, and therefore only the second part is of interest. Let us denote the right part
\[\begin{array}{rl}
\widetilde{L}(p)=
&\left.\left(\left(\frac{x_2}{x_3}\right)^{p-1}-\left(\frac{x_1}{x_3}\right)^{p-1}\right)(x_1-x_2)\left(\log\frac{x_1}{x_2}\right)^2+\right.\\
&\left.\left(\left(\frac{x_3}{x_2}\right)^{p-1}-\left(\frac{x_1}{x_2}\right)^{p-1}\right)(x_1-x_3)\left(\log\frac{x_1}{x_3}\right)^2+\right.\\
&\left(\left(\frac{x_3}{x_1}\right)^{p-1}-\left(\frac{x_2}{x_1}\right)^{p-1}\right)(x_2-x_3)\left(\log\frac{x_2}{x_3}\right)^2 +K.
\end{array}\]
It is then sufficient to show that $\widetilde{L}(p)$ is a stricly monotonous function. By taking the derivative and rearranging the terms we find the derivative
\begin{align}\label{eq:tildLder}\textstyle
{\widetilde{L}}^\prime(p)=
&\textstyle\log\frac{x_2}{x_1}\log\frac{x_2}{x_3}\left(\left(\frac{x_2}{x_3}\right)^{p-1}(x_1-x_2)\log\frac{x_2}{x_1}+\left(\frac{x_2}{x_1}\right)^{p-1}(x_2-x_3)\log\frac{x_3}{x_2}\right)+\nonumber\\
&\textstyle\log\frac{x_2}{x_1}\log\frac{x_3}{x_1}\left(\left(\frac{x_1}{x_3}\right)^{p-1}(x_1-x_2)\log\frac{x_2}{x_1}+\left(\frac{x_1}{x_2}\right)^{p-1}(x_1-x_3)\log\frac{x_3}{x_1}\right)+\nonumber\\
&\textstyle\log\frac{x_3}{x_1}\log\frac{x_3}{x_2}\left(\left(\frac{x_3}{x_2}\right)^{p-1}(x_1-x_3)\log\frac{x_3}{x_1}+\left(\frac{x_3}{x_1}\right)^{p-1}(x_2-x_3)\log\frac{x_3}{x_2}\right).
\end{align}
\begin{figure}[t!]\centering
\subfloat[][Second derivative of $L(p)$.]{\centering
\psfrag{p}[Bc][Bc][0.8][0]{$p$}
\psfrag{L}[Bc][Bc][0.8][0]{\raisebox{0.6cm}{$L^{\prime\prime}(p)$}}
\psfrag{a}[Bc][Bc][0.7][0]{$-10$}
\psfrag{b}[Bc][Bc][0.7][0]{$-8$}
\psfrag{c}[Bc][Bc][0.7][0]{$-6$}
\psfrag{d}[Bc][Bc][0.7][0]{$-4$}
\psfrag{e}[Bc][Bc][0.7][0]{$-2$}
\psfrag{f}[Bc][Bc][0.7][0]{$0$}
\psfrag{g}[Bc][Bc][0.7][0]{$2$}
\psfrag{h}[Bc][Bc][0.7][0]{$4$}
\psfrag{i}[Bc][Bc][0.7][0]{$6$}
\psfrag{j}[Bc][Bc][0.7][0]{$8$}
\psfrag{k}[Bc][Bc][0.7][0]{$10$}
\psfrag{A}[Br][Br][0.7][0]{$-0.15$}
\psfrag{B}[Br][Br][0.7][0]{$-0.1$}
\psfrag{C}[Br][Br][0.7][0]{$-0.05$}
\psfrag{D}[Br][Br][0.7][0]{$0$}
\psfrag{E}[Br][Br][0.7][0]{$0.05$}
\psfrag{F}[Br][Br][0.7][0]{$0.1$}
\psfrag{G}[Br][Br][0.7][0]{$0.15$}
\psfrag{H}[Br][Br][0.7][0]{$0.2$}
\psfrag{I}[Br][Br][0.7][0]{$0.3$}
\psfrag{J}[Br][Br][0.7][0]{$0.4$}
\psfrag{K}[Br][Br][0.7][0]{$0.5$}
\psfrag{X}[Bl][Bl][0.8][0]{$p^\star=0.707$}
\includegraphics[width=5.4cm]{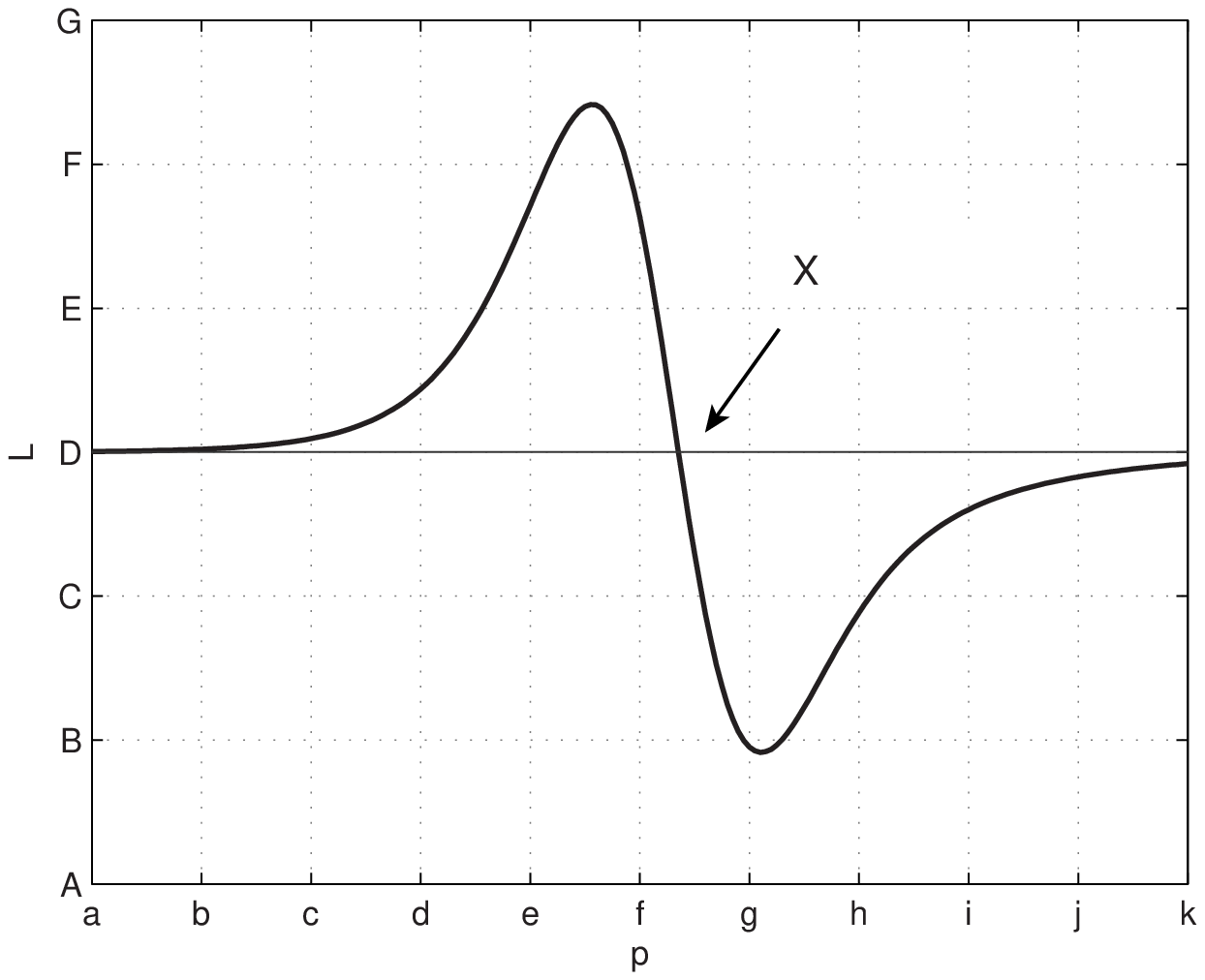}
}\hspace{0.6cm}
\subfloat[][Function $\widetilde{L}(p)$. Since $K\!=\!-0.94$, the inflection point is smaller than one.]{\centering
\psfrag{p}[Bc][Bc][0.8][0]{$p$}
\psfrag{L}[Bc][Bc][0.8][0]{\raisebox{0.5cm}{$\widetilde{L}(p)$}}
\psfrag{a}[Bc][Bc][0.7][0]{$-2$}
\psfrag{b}[Bc][Bc][0.7][0]{$-1$}
\psfrag{c}[Bc][Bc][0.7][0]{$0$}
\psfrag{d}[Bc][Bc][0.7][0]{$1$}
\psfrag{e}[Bc][Bc][0.7][0]{$2$}
\psfrag{f}[Bc][Bc][0.7][0]{$3$}
\psfrag{g}[Bc][Bc][0.7][0]{$4$}
\psfrag{A}[Br][Br][0.7][0]{$-10$}
\psfrag{B}[Br][Br][0.7][0]{$-8$}
\psfrag{C}[Br][Br][0.7][0]{$-6$}
\psfrag{D}[Br][Br][0.7][0]{$-4$}
\psfrag{E}[Br][Br][0.7][0]{$-2$}
\psfrag{F}[Br][Br][0.7][0]{$0$}
\psfrag{G}[Br][Br][0.7][0]{$2$}
\psfrag{H}[Br][Br][0.7][0]{$4$}
\psfrag{I}[Br][Br][0.7][0]{$6$}
\psfrag{J}[Br][Br][0.7][0]{$8$}
\psfrag{K}[Br][Br][0.7][0]{$10$}
\psfrag{Q}[Bl][Bl][0.7][0]{$\widetilde{L}(p)$}
\psfrag{R}[Bl][Bl][0.7][0]{$\widetilde{L}(p)+K$}
\psfrag{X}[Bl][Bl][0.8][0]{$p=1$}
\psfrag{Y}[Bc][Bc][0.8][0]{$p^\star=0.707$}
\includegraphics[width=5.4cm]{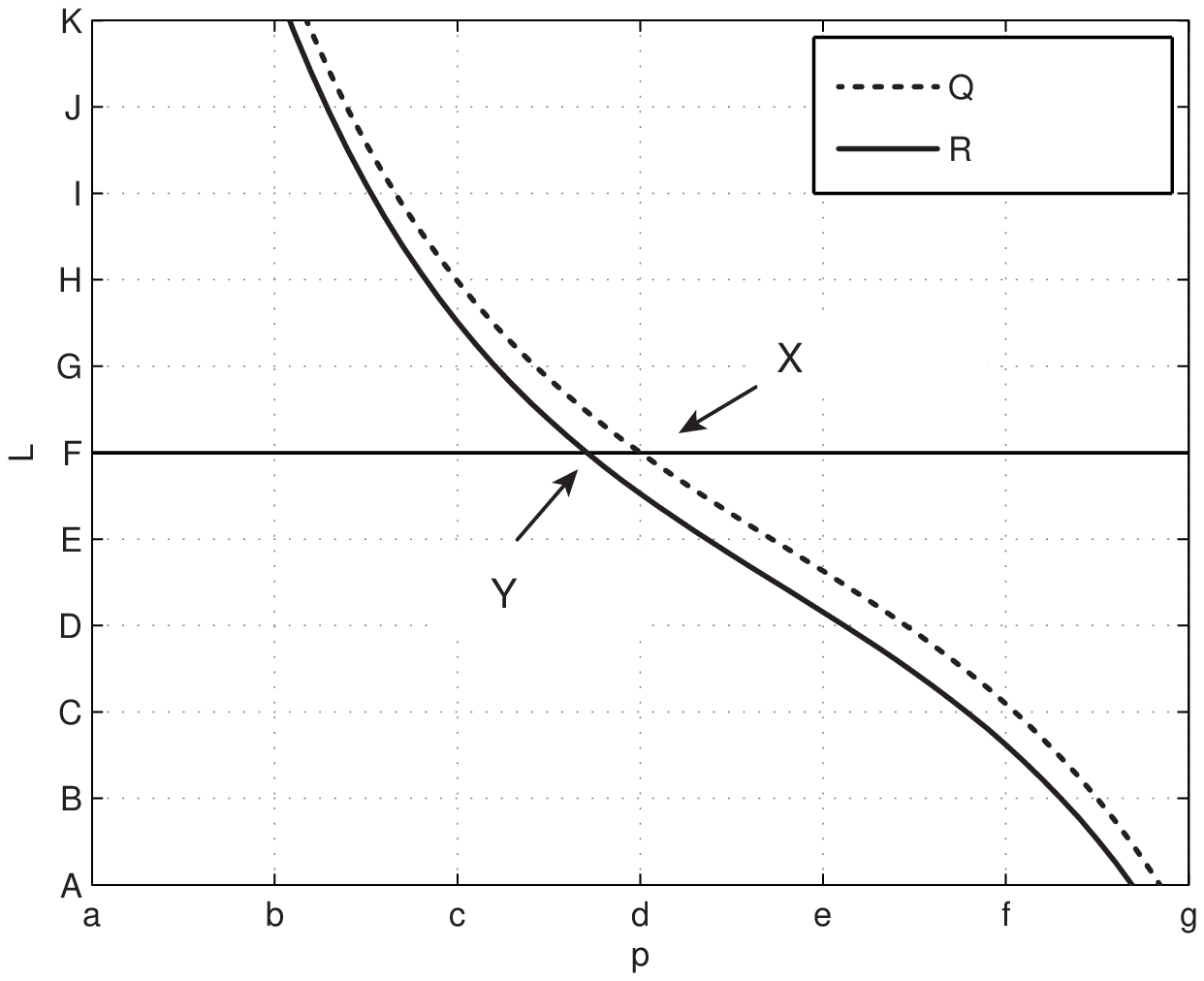}
}
\caption{Example for $n=3$: $x_1=1$, $x_2=2$, $x_3=3$. Inflection point $p^\star=0.707$.}\label{fig:fig2}
\end{figure}

We will now take use of Corollary~\ref{cor:ineq} for $n=3$. By expanding Eq.~\eqref{eq:ineq} we obtain 
\begin{multline*}\textstyle
\frac{1}{\left(x_1^p+x_2^p+x_3^p\right)\left(x_1^{p-1}+x_2^{p-1}+x_3^{p-1}\right)}\!\left(\!(x_1x_2)^{p-1}(x_1\!-\!x_2)\log\frac{x_1}{x_2}+(x_1x_3)^{p-1}(x_1\!-\!x_3)\log\frac{x_1}{x_3}+\right.\\
\textstyle\left.+(x_2x_3)^{p-1}(x_2\!-\!x_3)\log\frac{x_2}{x_3}\right)\geq 0.
\end{multline*}
After proper rearranging, we find that the following inequalities must hold.
\begin{subequations}
\begin{align}
\left(\frac{x_2}{x_3}\right)^{p-1}(x_1-x_2)\log\frac{x_2}{x_1}+\left(\frac{x_2}{x_1}\right)^{p-1}(x_2-x_3)\log\frac{x_3}{x_2}\leq (x_1-x_3)\log\frac{x_1}{x_3},\label{subeq:1}\\
\left(\frac{x_1}{x_3}\right)^{p-1}(x_1-x_2)\log\frac{x_2}{x_1}+\left(\frac{x_1}{x_2}\right)^{p-1}(x_1-x_3)\log\frac{x_3}{x_1}\leq(x_2-x_3)\log\frac{x_2}{x_3},\\
\left(\frac{x_3}{x_2}\right)^{p-1}(x_1-x_3)\log\frac{x_3}{x_1}+\left(\frac{x_3}{x_1}\right)^{p-1}(x_2-x_3)\log\frac{x_3}{x_2}\leq (x_1-x_2)\log\frac{x_1}{x_2}.\label{subeq:3}
\end{align}
\end{subequations}
By plugging Eqs.~\eqref{subeq:1}--\eqref{subeq:3} into Eq.~\eqref{eq:tildLder}, we find that
\[{\widetilde{L}}^\prime(p)\leq \log\frac{x_1}{x_3}\log\frac{x_2}{x_1}\log\frac{x_2}{x_3}\big((x_1-x_3)-(x_2-x_3)-(x_1-x_2)\big)=0,\]
thus $\widetilde{L}(p)$ is strictly decreasing for $x_1\ne x_2 \ne x_3$ and therefore also $L(p)$ has exactly one inflection point. 

Furthermore, from Eq.~\eqref{eq:n3} we see that for $p=1$, $L^{\prime\prime}(1)=\frac{1}{27}K$. Since the constant~$K$ (cf.~Eq.~\eqref{eq:K}) is always different from zero unless $x_1= x_2= x_3$, the inflection point~$p^\star$ must be different from $p=1$. Moreover, if $K<0$ then $p^\star<1$, and if $K>0$ then $p^\star>1$ (see the example on Fig.~\ref{fig:fig2} with $x_1=1$, $x_2=2$, and $x_3=3$).
\end{proof}

\vspace{0.3cm}

Unlike the previous case of $n=2$, the values of $x_i$ influence the location of the inflection point. We observe that the closer the values $x_i$ are to each other (a small variance), the further from $p=1$ the inflection point may be. Conversely, the bigger the variance, the closer to $p=1$ is the inflection point. Intuitively, by looking at $\widetilde{L}(p)$ one can see that the bigger the exponential terms are, e.g., $\left(\frac{x_2}{x_3}\right)^{p-1}\!\!-\left(\frac{x_1}{x_3}\right)^{p-1}$, the less ``freedom'' in choosing $p$ we have in order to $L^{\prime\prime}(p)=0$. Therefore, in this case, the inflection point must lie close to $p=1$.

\vspace{-0.25cm}

\subsection[Arbitrary n]{Arbitrary $n$}

As mentioned before the analysis for larger $n$ becomes more challenging. Already for $n=4$ the equivalent function to function $\widetilde{L}(p)$ may not be monotonous anymore, thus, there may be more inflection points.

From simulations we indeed found an example with three inflection points in case $n=4$ (see Fig.~\ref{fig:fig3}). Observe that the largest inflection point may be very far from $p=1$. We believe that this is the maximum possible number of inflection points for case $n=4$, however, a rigorous proof is missing at the moment. Nevertheless, we here provide a~simple combinatorial upper bound on the number of inflection points even in the general case.

\begin{figure}[t!]\centering
\subfloat[][Lehmer mean function $L(p)$.]{\centering
\psfrag{p}[Bc][Bc][0.8][0]{$p$}
\psfrag{L}[Bc][Bc][0.8][0]{\raisebox{0.6cm}{$L(p)$}}
\psfrag{a}[Bc][Bc][0.7][0]{$-500$}
\psfrag{b}[Bc][Bc][0.7][0]{$0$}
\psfrag{c}[Bc][Bc][0.7][0]{$500$}
\psfrag{d}[Bc][Bc][0.7][0]{$1000$}
\psfrag{e}[Bc][Bc][0.7][0]{$1500$}
\psfrag{f}[Bc][Bc][0.7][0]{$2000$}
\psfrag{g}[Bc][Bc][0.7][0]{$2500$}
\psfrag{A}[Br][Br][0.7][0]{$0.95$}
\psfrag{B}[Br][Br][0.7][0]{$0.96$}
\psfrag{C}[Br][Br][0.7][0]{$0.97$}
\psfrag{D}[Br][Br][0.7][0]{$0.98$}
\psfrag{E}[Br][Br][0.7][0]{$0.99$}
\psfrag{F}[Br][Br][0.7][0]{$1$}
\psfrag{G}[Br][Br][0.7][0]{$1.01$}
\psfrag{H}[Br][Br][0.7][0]{$1.02$}
\psfrag{I}[Br][Br][0.7][0]{$1.03$}
\psfrag{J}[Br][Br][0.7][0]{$1.04$}
\includegraphics[width=5.4cm]{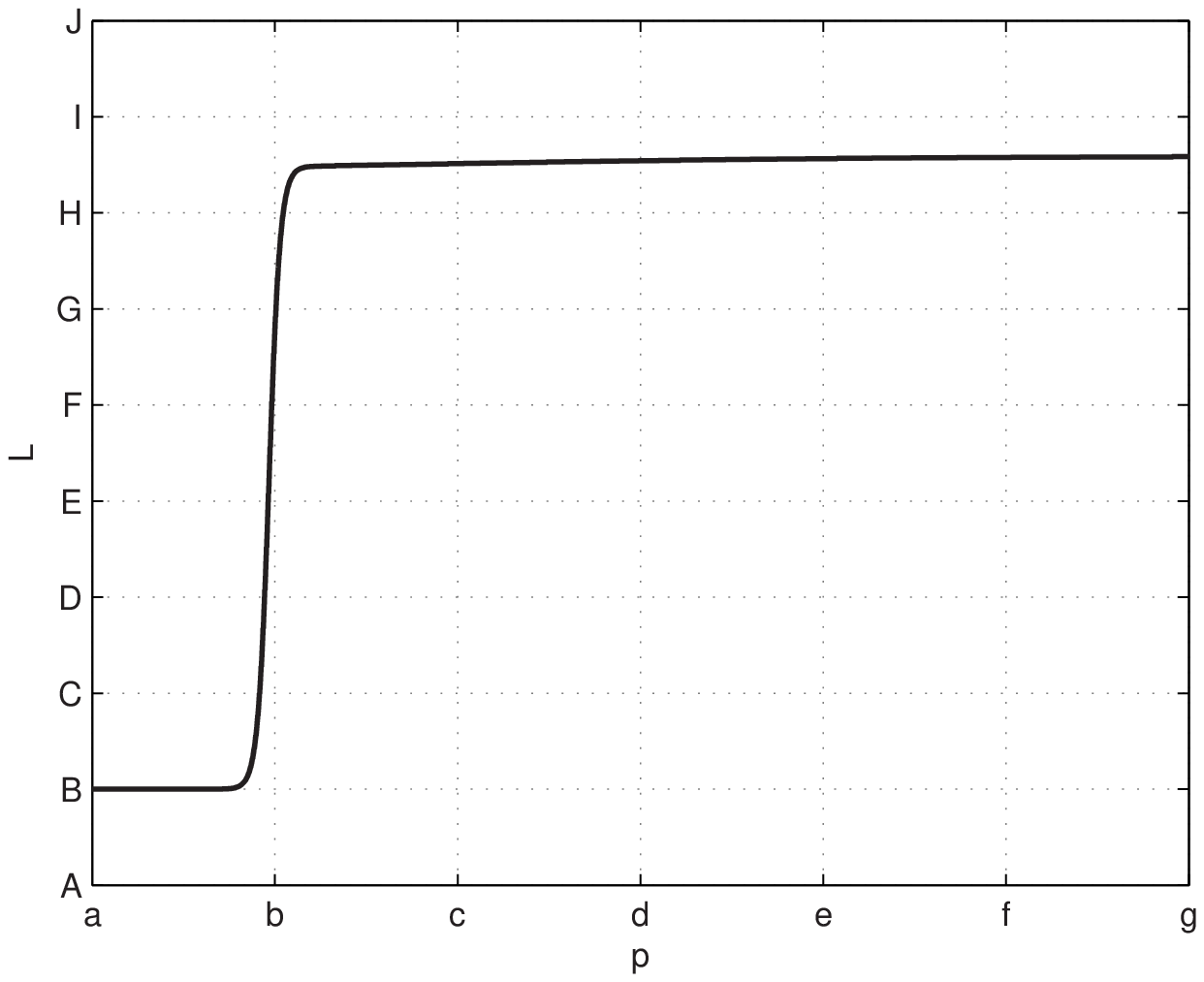}
}\hspace{0.7cm}
\subfloat[][Second derivative of function $L(p)$.]{\centering
\psfrag{p}[Bc][Bc][0.8][0]{$p$}
\psfrag{L}[Bc][Bc][0.8][0]{\raisebox{0.6cm}{$L^{\prime\prime}(p)$}}
\psfrag{a}[Bc][Bc][0.7][0]{$-500$}
\psfrag{b}[Bc][Bc][0.7][0]{$0$}
\psfrag{c}[Bc][Bc][0.7][0]{$500$}
\psfrag{d}[Bc][Bc][0.7][0]{$1000$}
\psfrag{e}[Bc][Bc][0.7][0]{$1500$}
\psfrag{f}[Bc][Bc][0.7][0]{$2000$}
\psfrag{g}[Bc][Bc][0.7][0]{$2500$}
\psfrag{A}[Br][Br][0.7][0]{$-1$}
\psfrag{B}[Br][Br][0.7][0]{$-0.8$}
\psfrag{C}[Br][Br][0.7][0]{$-0.6$}
\psfrag{D}[Br][Br][0.7][0]{$-0.4$}
\psfrag{E}[Br][Br][0.7][0]{$-0.2$}
\psfrag{F}[Br][Br][0.7][0]{$0$}
\psfrag{G}[Br][Br][0.7][0]{$0.2$}
\psfrag{H}[Br][Br][0.7][0]{$0.4$}
\psfrag{I}[Br][Br][0.7][0]{$0.6$}
\psfrag{J}[Br][Br][0.7][0]{$0.8$}
\psfrag{K}[Br][Br][0.7][0]{$1$}
\psfrag{M}[Bc][Bc][0.6][0]{$\times 10^{-9}$}
\psfrag{X}[Bl][Bl][0.7][0]{$p^\star_1=-15.8$}
\psfrag{Y}[Bl][Bl][0.7][0]{$p^\star_2=203.9$}
\psfrag{Z}[Bl][Bl][0.7][0]{$p^\star_3=401.4$}
\includegraphics[width=5.4cm]{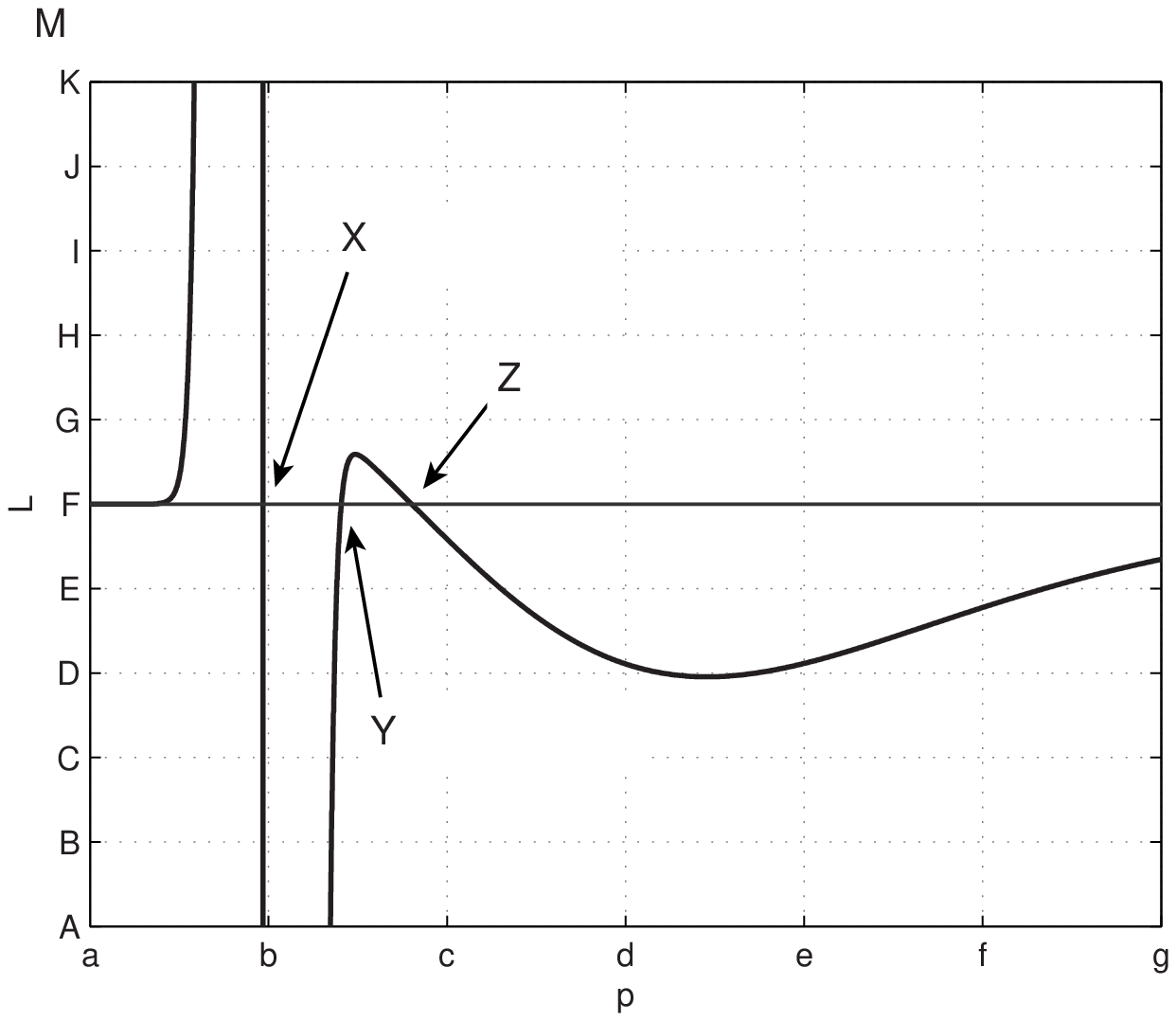}
}
\caption{Example of Lehmer mean function with three inflection points, $n=~4$. $x_i=~\{1.0259,1.0241,1.0244,0.96\}$, with inflection points $p^\star_1=-15.8075$, $p^\star_2=~203.9186$, $p^\star_3=401.3897$.}\label{fig:fig3}
\end{figure}

\begin{theorem}
The Lehmer mean function~\eqref{eq:lehmf} has at most $J=\frac{n(n+4)(n-1)}{6}-1$ inflection points, respectively, if $J$ is an even number, then it has at most $J-1$ inflection points.
\end{theorem}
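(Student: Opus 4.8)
The plan is to reduce the problem to counting the real zeros of a single exponential polynomial in $p$ and then apply a classical bound on such zeros. Writing $S_k=\sum_i x_i^{p}(\log x_i)^k$ and $T_k=\sum_i x_i^{p-1}(\log x_i)^k$, the bracket in \eqref{eq:secder} equals $\frac{S_2}{S_0}-\frac{T_2}{T_0}-2\frac{T_1}{T_0}\big(\frac{S_1}{S_0}-\frac{T_1}{T_0}\big)$. Since $L(p)=S_0/T_0>0$ and $S_0,T_0>0$, the inflection points are exactly the zeros of the numerator
\begin{equation*}
N(p)=S_2T_0^2-T_2S_0T_0-2T_1S_1T_0+2T_1^2S_0,
\end{equation*}
obtained after multiplying the bracket by the positive factor $S_0T_0^2$.

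Each term of $N(p)$ is a product of three of the sums $S_k,T_k$, so expanding it produces a triple sum over ordered index triples $(i,j,l)\in\{1,\dots,n\}^3$. Using $x_i^p=e^{p\log x_i}$, every such term carries the factor $e^{p(\log x_i+\log x_j+\log x_l)}$, i.e. its exponent in $p$ is $\log(x_ix_jx_l)$ and depends only on the multiset $\{i,j,l\}$. Thus $N(p)$ is an exponential polynomial $\sum_\mu c_\mu e^{\mu p}$ whose exponents $\mu$ range over the distinct values of $\log(x_ix_jx_l)$, and the number of such values is at most the number of size-$3$ multisets drawn from $n$ elements, namely $\binom{n+2}{3}=\frac{n(n+1)(n+2)}{6}$. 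First I would establish the classical fact (by induction via Rolle's theorem, after dividing by one exponential and differentiating) that an exponential polynomial with $m$ distinct exponents and nonzero coefficients has at most $m-1$ real zeros.

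The saving of $n$ in the stated bound comes from the diagonal multisets $\{i,i,i\}$. Collecting in $N(p)$ the contribution of the triple $i=j=l$ gives coefficient $(1-1-2+2)(\log x_i)^2x_i^{3p-2}=0$, so every diagonal exponent $3\log x_i$ receives zero contribution. Consequently each exponent with a nonzero coefficient must be fed by at least one non-diagonal multiset, and distinct exponents are fed by disjoint sets of multisets; hence the number of exponents with nonzero coefficient is at most $\binom{n+2}{3}-n=\frac{n(n+4)(n-1)}{6}$. Applying the zero bound yields at most $\frac{n(n+4)(n-1)}{6}-1=J$ inflection points.

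Finally I would invoke Lemma~\ref{lem:odd}: the number of inflection points is always odd, so whenever $J$ is even the count cannot reach $J$ and is therefore at most $J-1$. The main obstacle is the bookkeeping in the second and third paragraphs — organizing the triple sum, confirming that the diagonal coefficients cancel for all four terms simultaneously, and making the charging of nonzero exponents to non-diagonal multisets rigorous (in particular handling possible coincidences such as $\log(x_ix_jx_l)=3\log x_m$ among exponents). Once the count $\binom{n+2}{3}-n$ of relevant exponents is secured, the zero bound and the parity argument are routine.
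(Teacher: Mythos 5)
Your proposal is correct and follows essentially the same route as the paper: clear the positive denominator $S_0T_0^2$, count the resulting exponential terms as size-$3$ multisets ($\binom{n+2}{3}$ of them), observe that the diagonal terms $x_i^{3p}$ cancel to reduce the count to $\frac{n(n+4)(n-1)}{6}$, apply the classical bound that an exponential polynomial with $N$ distinct positive bases has at most $N-1$ real zeros, and finish with the parity argument from Lemma~\ref{lem:odd}. Your explicit verification of the diagonal coefficient $(1-1-2+2)=0$ and the charging argument handling possible exponent coincidences $\log(x_ix_jx_l)=3\log x_m$ are welcome details that the paper states without proof, but they do not change the approach.
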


\begin{proof}
Observe that the expression (cf. Eq.~\eqref{eq:secder}) 
\[\textstyle\left[\frac{\sum_{i=1}^n x_i^p(\log x_i)^2}{\sum_{i=1}^n x_i^p}-\frac{\sum_{i=1}^n x_i^{p-1}(\log x_i)^2}{\sum_{i=1}^n x_i^{p-1}}-2\frac{\sum_{i=1}^n x_i^{p-1}\log x_i}{\sum_{i=1}^n x_i^{p-1}}\left(\frac{\sum_{i=1}^n x_i^p\log x_i}{\sum_{i=1}^n x_i^p}-\frac{\sum_{i=1}^n x_i^{p-1}\log x_i}{\sum_{i=1}^n x_i^{p-1}}\right)\right],\] after putting the terms over a common denominator, is an exponential polynomial of order 3 with $n$ terms~\cite{Dreibelbis10}. After expanding the summations there will be exactly $\binom{n+3-1}{3}=\frac{(n+2)(n+1)n}{6}$ terms of the form $\left(x_{i_1}^{q_1}x_{i_2}^{q_2}x_{i_3}^{q_3}\right)^p$ such that $q_1+q_2+q_3=3$, where $\{i_1,i_2,i_3\}\subset\{1,2,\dots,n\}$. We further notice that all terms of the form $x_i^{3p}$ cancel out which leads to $N=\frac{(n+2)(n+1)n}{6}-n=\frac{n(n+4)(n-1)}{6}$ terms.

We know that an exponential polynomial, i.e., a polynomial of the form\break $P(x)=~\sum_{j=1}^Nc_ja_j^x$, can have at most $N-1$ zeros, in case $a_j>0$~\cite[Corollary~3.2]{Jameson06}.

Thus, $L^{\prime\prime}(p)$ has at most $J=\frac{n(n+4)(n-1)}{6}-1$ inflection points. Furthermore, from Lemma~\ref{lem:odd}, $J$ must be an odd number.
\end{proof}
\vspace{0.2cm}

This directly shows that if $n=2$, there can be at most one inflection point, as we showed in Sec.~\ref{subsec:n2}. For $n=3$, this bound gives us at most five inflection points, which is far from one inflection point as we proved in Sec.~\ref{subsec:n3}. For $n=4$, this bound gives us at most 15 inflection points. However, from the vast number of simulations, we believe that there can be at most three inflection points. We can thus see that this bound is very~loose.

\section{Conclusions}
We proved that the Lehmer mean function has exactly one inflection point in case $n=2$ and $n=3$. We further showed that in general the Lehmer mean function may have more inflections points and we provided a simple bound on the number of these inflection points. However, this bound is very loose and we believe that the number of inflection points must be less than or equal to $n$ for any $n$. 

We note that the convexity analysis of the Lehmer mean function is, in some sense, easier than the analysis of power mean functions, however, it is still a challenging task. New approaches and ideas are needed for a deeper analysis. For example, we have observed that the number of inflection points and their location is influenced by the variance of the values $x_i$. Typically, the closer to each other the values are, the less ``stable'' the function is, meaning that the inflection point may be far from one, and also the number of inflection points may grow. Conversely, if the values $x_i$ are far from each other, the function remains ``stable'', meaning that there is only one inflection point, which is close to one. To analyze this dependence remains, however, a challenging open issue.

\vspace{0.7cm}

\emph{Acknowledgements.} The author wishes to thank Prof. Markus Rupp for valuable comments and suggestions, especially for the idea for Lemma~\ref{lem:odd}. This work was supported by the Austrian Science Fund (FWF) under project grants S10611-N13 within the National Research Network SISE.

\bibliographystyle{siam}
\bibliography{references}
\clearpage
\end{document}